\numberwithin{equation}{section}
\newcommand{\R}{{\mathbb R}}
\newcommand{\N}{{\mathbb N}}
\newcommand{\lm}{{\lambda}}
\newcommand{\bel}[1]{\begin{equation}\label{#1}}
\newcommand{\bq}{\begin{equation}}
\newcommand{\ee}{\end{equation}}
\newcommand{\ba}{\begin{eqnarray}}
\newcommand{\ea}{\end{eqnarray}}
\newcommand{\qe}{\end{equation}}
\newtheorem{thm}{Theorem}[section]
\newtheorem{coro}[thm]{Corollary}
\newtheorem{lemma}[thm]{Lemma}
\newtheorem{pro}[thm]{Proposition}
\theoremstyle{definition}
\newtheorem{definition}[thm]{Definition}
\newtheorem{example}[thm]{Example}
\newtheorem{remark}[thm]{Remark}
\newcommand{\Hmm}[1]{\leavevmode{\marginpar{\tiny%
$\hbox to 0mm{\hspace*{-0.5mm}$\leftarrow$\hss}%
\vcenter{\vrule depth 0.1mm height 0.1mm width \the\marginparwidth}%
\hbox to 0mm{\hss$\rightarrow$\hspace*{-0.5mm}}$\\\relax\raggedright
#1}}}
\date{}
\DeclareMathOperator{\erf}{erf}
\begin{document}

\title{Spectral classes of regular, random, and empirical graphs }

\author{Jiao Gu$^{1,2,}$\footnote{Corresponding author with E-mail address:
jiao@bioinf.uni-leipzig.de},
J{\"u}rgen Jost$^{1,3}$,
Shiping Liu$^{4}$, and
Peter F.\ Stadler$^{2,1,3,5}$\\[1em]
\begin{minipage}{1.0\textwidth}
\begin{small}
$^1$Max Planck Institute for Mathematics in the Sciences, Inselstra{\ss}e
    22, D-04103 Leipzig, Germany.\\
$^2$Bioinformatics Group, Department of Computer Science and
   Interdisciplinary Center for Bioinformatics,
   University of Leipzig, H{\"a}rtelstrasse 16-18, D-04103 Leipzig,
   Germany.\\
$^3$Santa Fe Institute, 1399 Hyde Park Rd., Santa Fe NM 87501, USA.\\
$^4$Department of Mathematical Sciences, Durham University,
   DH1 3LE Durham, United Kingdom.\\
$^5$Institute for Theoretical Chemistry, W{\"a}hringerstrasse 17, A-1090
   Wien, Austria.\\
\end{small}
\end{minipage}
}
\maketitle

\begin{abstract}
  We define a (pseudo-)distance between graphs based on the spectrum of the
  normalized Laplacian, which is easy to compute or to estimate
  numerically. It can therefore serve as a rough classification of large
  empirical graphs into families that share the same asymptotic behavior of
  the spectrum so that the distance of two graphs from the same family is
  bounded by $\mathcal{O}(1/n)$ in terms of size $n$ of their vertex sets.
  Numerical experiments demonstrate that the spectral distance provides a
  practically useful measure of graph dissimilarity.
  \vspace*{1cm}
  \par\noindent\textbf{Keywords:}
  Quasimetric;
  Laplacian spectrum;
  Radon measure;
  Graph families;

\end{abstract}

\maketitle
\newpage

\section{Introduction}
Structural comparison of graphs has important applications in biology and
pattern recognition, see e.g. \cite{conte2004thirty,bunke1998graph}.  The
problem comes in two distinct flavors: it is comparably easy when
correspondences between nodes are known. This is the case e.g.\ for the
comparison of metabolic networks or protein-protein interaction networks
\cite{Berg:06}. The problem becomes much more difficult when node
correspondences are unknown, as is the case e.g.\ in the atom-mapping
problem reviewed in \cite{Chen:2013}. A classical combinatorial formulation
of the latter problem is to find the largest graph $G$ that is isomorphic
to a subgraph of each of two given input graphs $G_1$ and $G_2$. A natural
metric distance is given by $d_{MCSI}(G_1,G_2) :=\Vert G_1\setminus G\Vert
+ \Vert G_2\setminus G\Vert$, were $\Vert\cdot\Vert$ is measure of graph
size, e.g.\ the sum of edges and vertices. The main difficulty for
practical applications is that ``maximum common subgraph isomorphism
problem'' is NP-complete \cite{Cook:71,Read:77} and even APX-hard
\cite{Bahiense:12}.

For large graphs, thus, more computationally efficient distance measures
are required. Graph kernels \cite{Vishwanathan:10} describe graphs as
vectors of features, usually the occurrence data of small subgraphs have
increasingly been used in bioinformatics \cite{Kundu:13} and
chemoinformatics \cite{Ralaivola:05}. A related approach computed the
earth movement distance between the distributions of graph features
\cite{macindoe2010graph}. A practical difficulty is the fact that a very
large number of features is required to achieve sufficient resolution for
very large graphs.

Here we pursue a different approach that makes use of the representation of
graphs by its adjacency or its Laplacian matrix. Spectral properties of
these matrix representation are closely related to the graph structure
\cite{chung1997spectral,jost2001spectral,ranicki1992algebraic}. Spectral
graph theory in turn has received much inspiration from eigenvalue
estimates in Riemannian geometry, see e.g.\ \cite{chung1997spectral,
  bauer2012ollivier, horak2013interlacing}. Many of these estimates involve
only particular eigenvalues, like the smallest or largest. Here, instead we
wish to compare the entire spectra of two graphs to get some idea of how
similar or different they are \cite{banerjee2008spectrum}. The advantage of
such an approach is that nowadays, there exist very efficient and
numerically stable algorithms for computing the eigenvalues of a large
$(N\times N)$-matrix, in fact with an effort of only $\mathcal{O}(N^{2})$
in practice \cite{mario2012}. For the set of graphs of the same size, a
spectral distance based on the adjacency matrix was suggested in
\cite{stevanovic2007research} as a cospectral measure and further studied
in \cite{jovanovic2012spectral}. Although co-spectral graphs do exist, and
it remains an open problem what fraction of graphs is uniquely determined
by its spectrum \cite{Wang:10}, we shall see that the comparison of graph
spectra nevertheless provides as sensitive and computationally attractive
graph distance. We propose here a spectral distance associated with the
normalized Laplacian instead of the adjacency matrix, without any
constraint on the graph sizes. The reason is that the normalized Laplacian,
with its natural random walk or diffusion interpretation, seems to capture
some geometric properties better than the adjacency matrix.

Throughout this paper we assume that $G=(V,E)$ is a simple graph, i.e. a
finite, undirected, unweighted graph without self-loops or multiple edges,
where $V$ and $E$ are the vertex and edge sets. $n=|V|$ is the size of
$G$. We denote adjacency of $x$ and $y$ interchangeably as $x\sim y$ or
$xy\in E$.

The normalized Laplacian $\Delta_G$ of $G$ operates on functions $f: V\to
\R$ by
\begin{equation}
\Delta_G f(x)=f(x)-\frac{1}{d_x}\sum_{y\sim x}f(y),
\end{equation}
where $d_x$, the degree of $x$, is the number of edges connected to
$x$. $\Delta_G$ is a bounded and self-adjoint operator. The definition of
$\Delta$ can be expanded without difficulty to weighted graphs and with
some more effort also to directed ones. We will not explicitly consider
these more general cases here.

The spectrum of $\Delta_{G}$, denoted by $\lambda(G)$, consists of $n$
eigenvalues, all contained in $[0,2]$, i.e.,
$0=\lambda_{0}\leq\lambda_{1}\leq\ldots\leq\lambda_{n-1}\leq 2$. This
yields the Radon probability measure
\begin{equation}
 \label{eq:01}
\mu(G)=\frac{1}{n}\sum_{i=0}^{n-1} \delta_{\lm_i(G)}
\end{equation}
on $[0,2]$, where $\delta_{\cdot}$ denotes the Dirac measure.  We can then
integrate functions against this measure, for instance a Gaussian kernel
with center $x$ and bandwidth $\sigma$, to obtain
\cite{rosenblatt1956remarks, parzen1962estimation},
 \begin{equation}
  \label{eq:02}
  \rho_{G}(x)=\frac{1}{n}\sum\limits_{i=0}^{n-1}
  {\frac{1}{\sqrt{2\pi\sigma^{2}}}
    e^{-\frac{(x-\lambda_{i})^{2}}{2\sigma^{2}}}}.
 \end{equation}
 $\rho_{G}$ thus is a smoothed spectral density of $G$. It naturally gives
 rise to a pseudometric on the space of simple graphs by means of the
 $\ell^{1}$ distance of the spectral densities:
\begin{equation}
D(G,G'):= \int|\rho_{G}(x)-\rho_{G'}(x)|dx.
\label{e:distance}
\end{equation}

\begin{remark} It is obvious that $D$ yields a pseudometric on the space of
  (isomorphism classes of) graphs. $D$ is not a distance because of the
  possibility of cospectral graphs, that is, non-isomorphic graphs with the
  same spectrum, see \cite{mario2012} for a survey. Nevertheless, we shall
  call $D(G,G')$ the ``spectral distance'' between the graphs $G$ and
  $G'$. \end{remark}

\begin{remark}
  The fact that all complete bipartite graphs $K_{n_1,n_2}$ with the same
  total number $n=n_1+n_2$ of vertices have the same spectrum (it consists
  of 0 and 2 with multiplicity 1 and of 1 with multiplicity $n-2$) shows
  that the spectrum of the normalized Laplacian is not sensitive to the
  number of edges. This can be easily remedied, however, by using the
  (normalized) Laplacian on edges rather than on vertices. Then, see
  \cite{horak2013simplicial}, the spectrum is the same as that for
  vertices, except for the multiplicity of the eigenvalue zero which now
  counts the number of independent cycles, that is, is equal to
  $|E|-|V|+1$. In contrast, for the Laplacian on vertices that we have used
  here, the multiplicity of the eigenvalue 0 is equal to the number of
  connected components. All subsequent constructions will work for the
  Laplacian on edges as for that on vertices.
\end{remark}

In Section 2, the asymptotic behavior of families $G_n$ of finite graphs is
discussed, when their number $n$ of vertices tends to infinity. It turns
out that typical classes of graphs, like complete, complete bipartite,
cycle, path, cube, have characteristic asymptotic properties of the
corresponding measures $\mu$ from \eqref{eq:01}. In section 3, using the
interlacing theory, we prove that the distance between two finite graphs of
size of order $n$ that differ only in finitely many edit operations is
equal to $\mathcal{O}(1/n)$. Moreover, we show that the distance for
particular pairs of graphs converges to zero even with speed with
$\mathcal{O}(1/n^2)$ in many cases. Finally, in the numerical part of this
paper, we compare spectral distances within families of random graphs.

\section{Spectral classes of graphs}
Empirical studies have shown that qualitatively different type of large
graphs can in many cases be distinguished by the shape of their spectral
density. For example, in 1955, Wigner introduced his famous semicircle
law, which says that the spectrum of a large random symmetric matrix
follows a semicircle distribution \cite{wigner1955characteristic,
  wigner1956characteristic, wigner1958distribution}. In \cite{banerjee2008spectral}, it was found that the spectral distribution is an important characteristic of a
network, and  a  classification scheme for empirical networks  based on the
spectral plot of the Laplacian of the graph underlying the network was introduced.

Let $(G_n)_{n\in \mathbb{N}}$ be an infinite family of graphs $G_n$
of $n$ vertices.  An important recent development in graph theory is
concerned with the construction of suitable limits of such
families for $n\to \infty$. Typically, such limits should reflect the asymptotic distribution
of isomorphism classes of subgraphs. For dense graphs, one obtains the
graphons, whereas for sparse graphs, one has the notion of graphings, see
Lov{\'a}sz' monograph \cite{lovasz2012} for an overview.  Here, we propose
a weaker notion that is based on graph spectra, more precisely on the Radon
measure defined in (\ref{eq:01}). Thus, for a continuous function
$f:[0,2]\to \R$, we have
\begin{equation}
\mu(G)(f)=\frac{1}{n}\sum_{i=0}^{n-1} f(\lm_i(G)).
\label{eq:06}
\end{equation}
Recall that a family $\mu_n$ of Radon measures on $[0,2]$ converges weakly
to the Radon measure $\mu_0$, in symbols $\mu_n \rightharpoondown \mu_0$,
if $\mu_0(f)=\lim_{n\to \infty}\mu_n(f)$ for all continuous functions
$f:[0,2]\to \R$.

\begin{definition}
  A family $(G_n)_{n\in \N}$ of graphs belongs to the spectral class
  $\rho$, where $\rho$ is a Radon measure on $[0,2]$, if
  $\mu(G_n) \rightharpoondown \rho$ for $n\to \infty$.
\end{definition}
To demonstrate that this definition is meaningful, we consider a few simple
examples.

\begin{pro}
  Let $G_n=K_n$, the complete graph on $n$ vertices, or $G_n=K_{n_1,n_2}$,
  the complete bipartite graph on $n_1+n_2=n$ vertices. Then $G_n$ belongs
  to the spectral class $\delta_1$, where $\delta_1$ is the Dirac measure
  supported at $1$.
\label{complete_complete_bipartite}
\end{pro}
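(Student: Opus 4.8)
The plan is to compute the spectrum of $\Delta_{G_n}$ explicitly in each of the two cases, read off the associated Radon measure $\mu(G_n)$ from \eqref{eq:01}, and then verify the weak convergence $\mu(G_n) \rightharpoondown \delta_1$ directly against an arbitrary continuous test function $f:[0,2]\to\R$, which is exactly the condition in the definition of spectral class.

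First, for $G_n = K_n$ I would write the normalized Laplacian as $\Delta_{K_n} = I - \tfrac{1}{n-1}(J-I)$, where $J$ is the all-ones matrix and every vertex has degree $n-1$, so that $A = J-I$ and $D = (n-1)I$. Since $J$ has eigenvalue $n$ on the constant vector and $0$ on its orthogonal complement, the spectrum of $\Delta_{K_n}$ consists of $\lm_0 = 0$ with multiplicity one and $\lm = n/(n-1)$ with multiplicity $n-1$. Hence
\[
\mu(K_n) = \frac{1}{n}\,\delta_0 + \frac{n-1}{n}\,\delta_{n/(n-1)}.
\]
For the complete bipartite case I would simply invoke the spectrum already recorded in the second Remark above: $K_{n_1,n_2}$ has eigenvalues $0$ and $2$, each with multiplicity one, and $1$ with multiplicity $n-2$, so that
\[
\mu(K_{n_1,n_2}) = \frac{1}{n}\,\delta_0 + \frac{n-2}{n}\,\delta_1 + \frac{1}{n}\,\delta_2.
\]

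Second, I would test these measures against a continuous $f$. In the complete-graph case,
\[
\mu(K_n)(f) = \frac{1}{n}\,f(0) + \frac{n-1}{n}\,f\!\left(\frac{n}{n-1}\right).
\]
Because $f$ is continuous on the compact interval $[0,2]$ it is bounded there, so the first term tends to $0$; and since $n/(n-1)\to 1$, continuity of $f$ gives $f(n/(n-1))\to f(1)$ while $(n-1)/n \to 1$, so the product converges to $f(1)$. Therefore $\mu(K_n)(f)\to f(1) = \delta_1(f)$. The bipartite case is even more immediate, since $\mu(K_{n_1,n_2})(f) = \tfrac1n f(0) + \tfrac{n-2}{n}f(1) + \tfrac1n f(2) \to f(1) = \delta_1(f)$. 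As $f$ was arbitrary, both families converge weakly to $\delta_1$.

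There is no genuine analytic obstacle here; the computation is elementary throughout. The only step requiring a little care is the spectral calculation for $K_n$, where one must track both the isolated eigenvalue $n/(n-1)$ that drifts toward $1$ and the eigenvalue at $0$ whose weight vanishes. The key qualitative phenomenon that makes $\delta_1$ the common limit is that the ``trivial'' eigenvalues sitting at the endpoints $0$ (and, in the bipartite case, $2$) each carry mass only $\mathcal{O}(1/n)$ and are therefore invisible in the weak limit, while the entire remaining bulk of the spectrum concentrates at the value $1$.
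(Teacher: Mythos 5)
Your proof is correct and takes essentially the same route as the paper: compute the spectra of $K_n$ and $K_{n_1,n_2}$ explicitly and observe that the resulting measures converge weakly to $\delta_1$ (you merely spell out the test-function verification that the paper compresses into ``this easily implies the result''). One small point in your favor: your eigenvalue $n/(n-1)$ for $K_n$ is the correct one, whereas the paper's proof of this proposition writes $\frac{n-1}{n}$ --- evidently a typo, since the paper itself later records the spectrum of $K_n$ as $\{0,\,n/(n-1)^{\{n-1\}}\}$ in Section~4, and in any case both quantities tend to $1$ so the conclusion is unaffected.
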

In particular, the complete and the complete bipartite graphs
asymptotically belong to the same spectral class.
\begin{proof}
  The spectrum of $K_n$ consists of $0$ with multiplicity 1 and of
  $\frac{n-1}{n}$ with multiplicity $n-1$, and that of $K_{n_1,n_2}$ of $0$
  and $2$ with multiplicity 1 each and $1$ with multiplicity $n-2$. This
  easily implies the result.
\end{proof}
In fact, the family of $n$-cubes also asymptotically belongs
 to the same spectral class.

\begin{pro}
  Let $G_n$ be the $n$-cube on $2^n$ vertices, then the corresponding
  spectral class is given by $\delta_1$.
\label{cube_cube}
\end{pro}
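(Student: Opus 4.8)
The plan is to diagonalize $\Delta_{G_n}$ explicitly using the product structure of the cube, and then to recognize the resulting spectral measure as a rescaled binomial distribution that concentrates at the point $1$. First I would use that the $n$-cube is $n$-regular, so that $\Delta_{G_n}=I-\frac{1}{n}A$ with $A$ the adjacency matrix; consequently each adjacency eigenvalue $\nu$ contributes a Laplacian eigenvalue $1-\nu/n$. Since the $n$-cube is the Cartesian product of $n$ copies of $K_2$, and adjacency eigenvalues add under the Cartesian product, the eigenvalues of $A$ are the numbers $n-2k$ with multiplicity $\binom{n}{k}$ for $k=0,1,\dots,n$ (choosing the $k$ factors that contribute $-1$). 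Hence $\Delta_{G_n}$ has eigenvalues $2k/n$ with multiplicity $\binom{n}{k}$.

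Dividing by the number $2^n$ of vertices, the spectral measure from \eqref{eq:01} becomes
\[
\mu(G_n)=\frac{1}{2^n}\sum_{k=0}^{n}\binom{n}{k}\,\delta_{2k/n}.
\]
I would then observe that $\frac{1}{2^n}\binom{n}{k}$ is exactly the probability that a $\mathrm{Binomial}(n,1/2)$ variable $X$ equals $k$, so that $\mu(G_n)$ is the law of $Y_n:=2X/n$. This variable has mean $1$ and variance $\mathbb{E}[(Y_n-1)^2]=1/n$, which tends to $0$, so $Y_n\to 1$ in probability by Chebyshev's inequality.

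To conclude, for any continuous $f:[0,2]\to\R$ I would write $\mu(G_n)(f)=\mathbb{E}[f(Y_n)]$ and pass to the limit $f(1)=\delta_1(f)$; since $[0,2]$ is compact, $f$ is bounded and uniformly continuous, which converts the convergence in probability of $Y_n$ into convergence of the expectations. This yields $\mu(G_n)\rightharpoondown\delta_1$.

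The genuinely structural step, and the one I expect to be the main point, is the eigenvalue computation via the Cartesian-product decomposition; everything afterwards is a standard concentration estimate. The only mild care needed is the final passage from convergence in probability to convergence of integrals, which is precisely where uniform continuity on the compact interval $[0,2]$ is used.
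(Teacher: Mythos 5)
Your proof is correct, and it reaches the same spectral measure $\mu(G_n)=\frac{1}{2^n}\sum_{k=0}^n\binom{n}{k}\delta_{2k/n}$ as the paper, but the concentration argument is genuinely different. The paper takes the eigenvalues $2k/n$ with multiplicity $\binom{n}{k}$ as known, splits the sum $\frac{1}{2^n}\sum_k\binom{n}{k}f(2k/n)$ into two tails and a central window around $k=n/2$, and kills the tails by a direct Stirling-type estimate on $\binom{n}{\lfloor nt\rfloor}/2^n$ for $t=\frac{1-\alpha}{2}$, using that $g(s)=s\log s+(1-s)\log(1-s)$ is minimized at $s=\frac12$. You instead recognize $\mu(G_n)$ as the law of $Y_n=2X/n$ with $X\sim\mathrm{Binomial}(n,\frac12)$, compute $\mathbb{E}[Y_n]=1$ and $\mathrm{Var}(Y_n)=1/n$, and conclude by Chebyshev plus uniform continuity of $f$ on the compact interval $[0,2]$. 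Your route is shorter and cleaner: Chebyshev replaces the Stirling computation entirely, and it even gives a quantitative bound $|\mu(G_n)(f)-f(1)|\leq\varepsilon+2\|f\|_\infty/(n\delta^2)$. The paper's tail estimate buys exponential (rather than $1/n$) decay of the tail mass, but that extra strength is not needed for weak convergence. You also supply the eigenvalue derivation (Cartesian product of $n$ copies of $K_2$, adjacency eigenvalues adding, $n$-regularity giving $\Delta=I-\frac{1}{n}A$), which the paper simply asserts; that is a welcome addition rather than a deviation.
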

\begin{proof}
  The spectrum of the $n$-cube consists of $\frac{2k}{n}$ with multiplicity
  $\binom{n}{k}$, where $k=0,\ldots,n$. For any continuous function $f:
  [0,2]\rightarrow \mathbb{R}$, $\forall \varepsilon>0$, there exists a
  sufficiently small constant $\alpha\in(0,\frac{1}{2})$, such that
  $|f(x)-f(1)|<\varepsilon$ for any $x\in (1-2\alpha,1+2\alpha)$. Then,
\begin{align*}
  &\left|\lim_{n\to \infty}\sum\limits_{k=0}^{n}\frac{1}{2^n}
    \binom{n}{k}f\left(\frac{2k}{n}\right)-f(1)\right|\\
  \leq&\left|\lim_{n\to \infty}
    \sum\limits_{k=0}^{[\frac{n}{2}(1-\alpha)]-1}
    \frac{1}{2^n}\binom{n}{k}f\left(\frac{2k}{n}\right) \right|+
  \left| \lim_{n\to \infty}\sum\limits_{k=[\frac{n}{2}(1+\alpha)]+1}^{n}
    \frac{1}{2^n}\binom{n}{k}f\left(\frac{2k}{n}\right) \right|\\
  &+\left|\lim_{n\to \infty}
    \sum\limits_{k=[\frac{n}{2}(1-\alpha)]}^{[\frac{n}{2}(1+\alpha)]}
    \frac{1}{2^n}\binom{n}{k}f\left(\frac{2k}{n}\right) -f(1)\right|\\
  \leq&\max\limits_{x\in[0,2]}\left|f(x)\right|\lim_{n\to \infty}
  \sum\limits_{k=0}^{[\frac{n}{2}(1-\alpha)]-1}
  \frac{1}{2^n}\binom{n}{k}+\max\limits_{x\in[0,2]}\left|f(x)\right|
  \lim_{n\to \infty}\sum\limits_{k=[\frac{n}{2}(1+\alpha)]+1}^{n}
  \frac{1}{2^n}\binom{n}{k} \\
  &+\left|\lim_{n\to \infty}
    \sum\limits_{k=[\frac{n}{2}(1-\alpha)]}^{[\frac{n}{2}(1+\alpha)]}
    \frac{1}{2^n}\binom{n}{k}f\left(\frac{2k}{n}\right)-f(1)\right|.
\end{align*}
With Stirling's approximation $n!\thickapprox\sqrt{2\pi n}(\frac{n}{e})^n$ \cite{robbins1955remark}
and $t:=\frac{1-\alpha}{2}$, we obtain
\begin{align*}
&\lim\limits_{n\to \infty}\sum\limits_{k=0}^{[\frac{n}{2}(1-\alpha)]-1}
\frac{1}{2^n}\binom{n}{k}\leq\lim_{n\to \infty}
\frac{nt}{2^n}\binom{n}{nt-1}
\leq\lim_{n\to \infty}\frac{nt}{e^{n\log2}}\binom{n}{nt}\\
=&\lim_{n\to \infty}\frac{t\sqrt{n}}{\sqrt{2\pi}e^{n\log 2}}
\frac{1}{e^{nt\log t+n(1-t)\log(1-t)+\frac{1}{2}\log t+\frac{1}{2}\log(1-t)}}
\end{align*}
Suppose $g(s)=s\log s+(1-s)\log(1-s)$ for $s\in(0,1)$, then $g'(s)=\log
s-\log(1-s)$, we know $g(s)$ attains its minimal value $-\log 2$ at
$\frac{1}{2}$. Then we have $g(\frac{1-\alpha}{2})=-\log 2+C_{\alpha}$ for
some positive constant $C_{\alpha}$ depending on $\alpha$. Therefore,
$\lim\limits_{n\to \infty}
\sum\limits_{k=0}^{[\frac{n}{2}(1-\alpha)]-1}\frac{1}{2^n}\binom{n}{k}=0$.
Similarity, we also can prove
$\lim\limits_{n\to \infty}
\sum\limits_{k=[\frac{n}{2}(1+\alpha)]+1}^{n}\frac{1}{2^n}\binom{n}{k}=0$.
Combined with $\sum\limits_{k=0}^{n}\frac{1}{2^n}\binom{n}{k}=1$, we have,
\begin{align*}
&\left|\lim_{n\to \infty}\sum\limits_{k=0}^{n}
  \frac{1}{2^n}\binom{n}{k}f(\frac{2k}{n})-f(1)\right|
\leq\left|\lim_{n\to \infty}
  \sum\limits_{k=[\frac{n}{2}(1-\alpha)]}^{[\frac{n}{2}(1+\alpha)]}
  \frac{1}{2^n}\binom{n}{k}f(\frac{2k}{n}) -f(1)\right|\\
\leq&\left|\lim_{n\to \infty}
  \sum\limits_{k=[\frac{n}{2}(1-\alpha)]}^{[\frac{n}{2}(1+\alpha)]}
  \frac{1}{2^n}\binom{n}{k}\left(f(\frac{2k}{n}) -f(1)\right)\right|\\
\leq&\lim_{n\to \infty}
  \sum\limits_{k=[\frac{n}{2}(1-\alpha)]}^{[\frac{n}{2}(1+\alpha)]}
  \frac{1}{2^n}\binom{n}{k}\varepsilon\leq\varepsilon
\end{align*}
So, we have
$$\lim_{n\to \infty}\sum\limits_{k=0}^{n}\frac{1}{2^n}
\binom{n}{k}f(\frac{2k}{n})=f(1).$$
Hence, for $G_{n}$, the corresponding spectral class is $\delta_1$.
\end{proof}

\begin{pro}
  Let $G_n$ be the petal graph on $n=2m+1$ vertices, that is, the graph
  consisting of $m$ triangles all joined at one vertex (this vertex then
  has degree $2m$, whereas all other vertices are of degree 2). The
  spectral class then is given by
  $\frac{1}{2}\delta_{\frac{1}{2}}+\frac{1}{2}\delta_{\frac{3}{2}}$.
\end{pro}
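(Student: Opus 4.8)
The plan is to compute the spectrum of $\Delta_{G_n}$ exactly, exploiting the large symmetry group of the petal graph, and then to pass to the weak limit of the measures $\mu(G_n)$. Denote the central vertex by $o$ (of degree $2m$) and, for $j=1,\dots,m$, write $a_j,b_j$ for the two degree-$2$ vertices of the $j$-th triangle, so that $a_j\sim b_j$, $a_j\sim o$ and $b_j\sim o$. Setting $\beta=1-\lambda$, the eigenvalue equation $\Delta_{G_n}f=\lambda f$ is equivalent to the system
$$\frac{1}{2m}\sum_{j=1}^m\bigl(f(a_j)+f(b_j)\bigr)=\beta f(o),\qquad \tfrac12\bigl(f(o)+f(b_j)\bigr)=\beta f(a_j),\qquad \tfrac12\bigl(f(o)+f(a_j)\bigr)=\beta f(b_j).$$

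Next I would diagonalize by splitting each triangle into a symmetric and an antisymmetric mode. Subtracting the two outer equations at $a_j$ and $b_j$ gives $\bigl(\beta+\tfrac12\bigr)\bigl(f(b_j)-f(a_j)\bigr)=0$, so unless $\beta=-\tfrac12$ one must have $f(a_j)=f(b_j)$. This isolates an $m$-dimensional antisymmetric sector $\{\,f(o)=0,\ f(a_j)=-f(b_j)\,\}$, on which the central equation holds automatically and the outer equations force $\beta=-\tfrac12$, i.e.\ $\lambda=\tfrac32$. On the complementary symmetric sector $f(a_j)=f(b_j)=:s_j$ the outer equation reads $\tfrac12 f(o)=\bigl(\beta-\tfrac12\bigr)s_j$. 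When $\beta=\tfrac12$ this forces $f(o)=0$ together with the hub constraint $\sum_j s_j=0$, yielding an $(m-1)$-dimensional eigenspace for $\lambda=\tfrac12$; when $\beta\neq\tfrac12$ all $s_j$ must coincide, and the two remaining scalar equations collapse to $2\beta^2-\beta-1=0$, whose roots $\beta\in\{1,-\tfrac12\}$ give the eigenvalue $\lambda=0$ (the constant function) and one further copy of $\lambda=\tfrac32$.

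Collecting these, the eigenvalues $0,\tfrac12,\tfrac32$ occur with multiplicities $1,\ m-1,\ m+1$, and since $1+(m-1)+(m+1)=2m+1=n$ they exhaust the spectrum. Therefore
$$\mu(G_n)=\frac{1}{2m+1}\Bigl(\delta_0+(m-1)\,\delta_{1/2}+(m+1)\,\delta_{3/2}\Bigr),$$
and letting $m\to\infty$ the atom at $0$ has weight $\tfrac{1}{2m+1}\to0$ while the weights at $\tfrac12$ and $\tfrac32$ both tend to $\tfrac12$. Hence $\mu(G_n)\rightharpoondown \tfrac12\delta_{1/2}+\tfrac12\delta_{3/2}$, which is the asserted spectral class.

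The only delicate point is the bookkeeping of the symmetric sector: one must notice that $\beta=\tfrac12$ is genuinely exceptional, being exactly where the factor $\bigl(\beta-\tfrac12\bigr)$ multiplying $s_j$ degenerates, so it has to be handled separately from the generic case, and it is precisely this case that produces the $(m-1)$-dimensional eigenspace at $\lambda=\tfrac12$ cut out by $\sum_j s_j=0$. Once the dimension count $1+(m-1)+(m+1)=n$ closes, no eigenvalue can have been missed, and the weak convergence is then immediate since every $\mu(G_n)$ is supported on the fixed finite set $\{0,\tfrac12,\tfrac32\}$ with only the weights varying.
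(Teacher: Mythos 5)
Your proposal is correct and follows the same route as the paper: the paper's proof simply asserts that the spectrum of the petal graph is $0$ with multiplicity $1$, $\tfrac12$ with multiplicity $m-1$, and $\tfrac32$ with multiplicity $m+1$, and reads off the weak limit. Your symmetric/antisymmetric decomposition supplies the derivation of exactly that spectrum which the paper omits, and your multiplicity count and passage to the limit are both accurate.
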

\begin{proof}
  The spectrum of $G_n$ consists of $0$ with multiplicity 1, $\frac{1}{2}$
  with multiplicity $m-1$ and $\frac{3}{2}$ with multiplicity $m+1$.
\end{proof}

\begin{pro}
  In contrast, when $G_n=P_n$, the path with $n$ nodes, or $G_n=C_n$, the
  cycle with $n$ nodes, then the corresponding spectral class $\rho$ has no
  atoms, that is, $\rho(A)=0$ whenever $A$ is a finite subset of $[0,2]$,
  for instance a single point. In fact, $\rho$ is absolutely continuous
  with respect to the Lebesgue measure on $[0,2]$ with a probability
  density function (see Fig. \ref{F1:2.1 density})
  \begin{equation}
  g(x)=\frac{1}{\pi}\frac{1}{\sqrt{2x-x^2}}.
  \end{equation}
  \label{path_cycle}
\end{pro}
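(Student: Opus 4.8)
The plan is to diagonalize both graphs explicitly and recognize the resulting spectral average as a Riemann sum. Both $P_n$ and $C_n$ have normalized Laplacian eigenvalues of the form $1-\cos\theta$. For the cycle $C_n$, which is $2$-regular, one has $\Delta_{C_n}=I-\tfrac12 A$, and since the adjacency eigenvalues of $C_n$ are $2\cos(2\pi k/n)$, the spectrum is $\lm_k=1-\cos(2\pi k/n)$ for $k=0,\dots,n-1$. For the path $P_n$ the operator is \emph{not} that of a regular graph, so I would first record (and verify on small cases, or by analysing the tridiagonal symmetric matrix $D^{-1/2}AD^{-1/2}$) that its spectrum is $\lm_k=1-\cos\bigl(k\pi/(n-1)\bigr)$ for $k=0,\dots,n-1$. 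Establishing this latter formula is the one genuinely graph-theoretic input and the place to be careful, precisely because the two degree-$1$ endpoints break regularity and perturb the naive boundary conditions.

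With the spectra in hand, I fix a continuous $f:[0,2]\to\R$ and write, using \rf{eq:06},
$$\mu(P_n)(f)=\frac1n\sum_{k=0}^{n-1} f\!\left(1-\cos\frac{k\pi}{n-1}\right),\qquad
\mu(C_n)(f)=\frac1n\sum_{k=0}^{n-1} f\!\left(1-\cos\frac{2\pi k}{n}\right).$$
The map $\theta\mapsto f(1-\cos\theta)$ is continuous on the compact interval $[0,\pi]$ (resp.\ $[0,2\pi]$), so the first sum is a Riemann sum converging to $\frac1\pi\int_0^\pi f(1-\cos\theta)\,d\theta$, and the second, after using the symmetry $\theta\mapsto 2\pi-\theta$ of the cosine, converges to $\frac{1}{2\pi}\int_0^{2\pi} f(1-\cos\theta)\,d\theta=\frac1\pi\int_0^\pi f(1-\cos\theta)\,d\theta$. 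Thus both families produce exactly the same limit functional.

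Finally I substitute $x=1-\cos\theta$, so that $dx=\sin\theta\,d\theta$ and $\sin\theta=\sqrt{1-(1-x)^2}=\sqrt{2x-x^2}$; as $\theta$ runs over $[0,\pi]$, $x$ runs over $[0,2]$. This turns the limit into
$$\frac1\pi\int_0^\pi f(1-\cos\theta)\,d\theta=\int_0^2 f(x)\,\frac{1}{\pi\sqrt{2x-x^2}}\,dx=\int_0^2 f(x)\,g(x)\,dx.$$
Since this holds for every continuous $f$, we conclude $\mu(P_n)\rightharpoondown\rho$ and $\mu(C_n)\rightharpoondown\rho$ with $d\rho=g\,dx$; because $\rho$ has an $L^1$ density it is absolutely continuous with respect to Lebesgue measure and hence assigns measure zero to every finite set, in particular to every single point. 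The only apparent subtlety is that $g$ blows up like $(2x-x^2)^{-1/2}$ at the endpoints $x=0,2$, but this is harmless: the singularity is merely the Jacobian of the substitution, while in the $\theta$-variable the integrand $f(1-\cos\theta)$ stays continuous on a compact interval, so the Riemann-sum convergence requires no improper-integral or tail estimate at all.
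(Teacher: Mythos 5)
Your argument is correct, and it reaches the same limit measure from the same starting point (the explicit eigenvalues $1-\cos\frac{k\pi}{n-1}$ for $P_n$ and $1-\cos\frac{2\pi k}{n}$ for $C_n$), but the mechanism for passing to the limit is different from the paper's. You verify the definition of weak convergence used in the paper directly: for each continuous $f$ you recognize $\mu(G_n)(f)$ as a Riemann sum in the angle variable $\theta$, pass to $\frac{1}{\pi}\int_0^\pi f(1-\cos\theta)\,d\theta$, and then obtain the density $g(x)=\frac{1}{\pi\sqrt{2x-x^2}}$ by the substitution $x=1-\cos\theta$. The paper instead computes the cumulative distribution function $F_n(x)=\frac1n\bigl(\bigl[\frac{n-1}{\pi}\arccos(1-x)\bigr]+1\bigr)$, shows $F_n\to F=\frac{1}{\pi}\arccos(1-x)$ pointwise, and invokes the standard equivalence between convergence of distribution functions and weak convergence of probability measures on $\R$. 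Your route is slightly more self-contained relative to the paper's own definition of $\rightharpoondown$ (no portmanteau-type theorem is needed, only uniform continuity of $f(1-\cos\theta)$ on a compact interval), and your closing remark that the endpoint singularity of $g$ is just the Jacobian of the substitution is a worthwhile observation; the paper's CDF computation has the advantage of exhibiting $F$ in closed form, from which absence of atoms is read off immediately. Both arguments take the eigenvalue formula for $P_n$ as known, which you rightly flag as the one genuinely graph-theoretic input.
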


\begin{figure}[t]
\begin{center}
\includegraphics[width=3in]{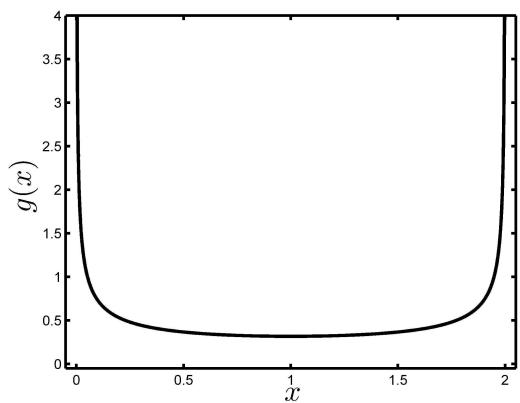}
\end{center}
\caption{The probability density function $g(x)$.}
\label{F1:2.1 density}
\end{figure}
\begin{proof}
  The spectrum of $P_n$ consists of $1-\cos\frac{\pi k}{n-1}$ with equal
  multiplicity 1, where $k=0, \ldots, n-1$. Denote the cumulative
  distribution functions of $\mu(P_n)$ by $F_n(x):=\mu(P_n)([0,x])$ for
  $0\leq x\leq 2$. We observe that
\begin{equation}
  F_n(x)=\frac{1}{n}
  \left(\left[\frac{n-1}{\pi}\arccos(1-x)\right]+1\right)
\end{equation}
and then
\begin{equation}
\lim_{n\rightarrow\infty}F_n(x)=\frac{1}{\pi}\arccos(1-x):=F(x).
\end{equation}
Let $\rho$ be the probability measure such that $d\rho(x)=g(x)dx$, $x\in
[0,2]$. Then $F$ is the cumulative distribution function of $\rho$.  By the
property of weak convergence of probability measures on $\mathbb{R}$, we
know $\mu(P_n)\rightharpoondown\rho$. The case for $C_n$ with the spectrum
$1-\cos\frac{2\pi k}{n}$, where $k=0,\ldots, n-1$, can be proved
similarly. They belong to the same spectral class $\rho$.
\end{proof}

There also exist families $(G_n)$ whose asymptotic spectral contains both a
Dirac and a regular component. For instance, take the graph $G_n$ with
$n=3m$ obtained from an even cycle $C_{2m}$ where every other node is
duplicated. According to \cite{banerjee-jost2008}, this graph has the
eigenvalue $1$ with multiplicity $m$, but it also inherits a slightly
perturbed version of the spectrum of $C_{2m}$. The latter yields a regular
contribution to the asymptotic spectral measure, whereas the former
contributes an atomic part $\frac{1}{3}\delta_1$.

Given an arbitrary family of graphs, the corresponding spectral class may
be not well-defined.
\begin{example}
Let $G_n$ be given as follows.
\begin{equation*}
G_n=\left\{
      \begin{array}{ll}
        K_n, & \hbox{if $n$ even;} \\
        \text{petal graph}, & \hbox{if $n$ odd.}
      \end{array}
    \right.
\end{equation*}
Then there is no well-defined spectral class for $(G_n)_{n\in \mathbb{N}}$.
\end{example}
However, by the Prokhorov theorem \cite{Prokhorov:56}, we know that for any
family $(G_n)$ of graphs, there at least exists a sub-family of it which
belong to one spectral class.

The main result of this section is that two graph families that differ only
by finite modifications do not differ in their asymptotic spectral
class. Normally, two graphs could be related to each other through some
modification. We define the following modification as the edit operations.

\begin{definition}
  An edit operation on a graph $G$ is the insertion or deletion of an edge
  or the insertion or deletion of an isolated vertex.
\end{definition}
A finite graph always can be changed to  another finite one through
finitely many edit operations. If $G'$ and $G''$ are connected there is
always a sequence of edit operations so that all intermediates are also
finite. Furthermore, one may ``couple'' the insertion and deletion of
isolated vertices with the insertion of the first and the deletion of the
last edges that it is incident with, so that graph editing can be
specified in terms of edge insertions and edge deletions. The edit distance
$d_{\textrm{edit}}(G',G'')$ between two graphs $G'$ and $G''$ can be defined
as the minimal number of edge operations required to convert $G'$ into
$G''$. It is well known that $d_{\textrm{edit}}$ is a metric and equals
$|E(G')|+|E(G'')|-2 |E(H)|$, where $H$ is the edge-maximal common subgraph
of $G'$ and $G''$, which is very hard to compute.

\begin{thm}\label{seciton2main}
  Let $G_n$ and $G'_{n'}$ be graphs with $n$ and $n'$ vertices,
  respectively. Assume that $G'_{n'}$ can be obtained from $G_n$ by at most
  $C$ steps of edit operations, where $C$ is independent of $n$. Then the
  families $(G_n)$ and $(G'_{n'})$ belong to the same spectral class
  (assuming that the corresponding spectral measures possess weak limits).
\label{thm:interlacing1}
\end{thm}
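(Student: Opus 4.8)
The plan is to reduce the claim to a uniform comparison of the cumulative distribution functions of the two spectral measures, and to control a single edit operation by an interlacing inequality for the normalized Laplacian. Write $N_G(x):=\#\{i:\lambda_i(G)\le x\}$ for the eigenvalue counting function, so that the cumulative distribution function of $\mu(G)$ is $F_G(x)=N_G(x)/|V(G)|$. By the characterization of weak convergence of probability measures on $\R$ recalled in the proof of Proposition~\ref{path_cycle}, $\mu(G_n)\rightharpoondown\rho$ is equivalent to $F_{G_n}(x)\to F_\rho(x)$ at every continuity point of the limiting distribution $F_\rho$. My goal is therefore the estimate
\[
\sup_{x\in[0,2]}\bigl|F_{G_n}(x)-F_{G'_{n'}}(x)\bigr|=\mathcal O(1/n).
\]
Since $C$ is independent of $n$, and since (as noted before the theorem) every edit operation can be recast through edge insertions and deletions together with a change of the vertex count by at most $C$, the triangle inequality reduces the task to controlling a single edge operation on a fixed vertex set, plus absorbing the passage from the normalization $n$ to $n'=n+\mathcal O(1)$.

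For a single edge operation I would invoke an interlacing inequality for the normalized Laplacian, of the type established in \cite{horak2013interlacing}: if $G'$ arises from $G$ on the same vertex set by inserting or deleting one edge, then the ordered spectra interlace in the form $\lambda_{i-1}(G)\le\lambda_i(G')\le\lambda_{i+1}(G)$. Reading this off for the counting functions gives the key bound. If $N_{G'}(x)=m$, then $\lambda_{m-1}(G')\le x<\lambda_m(G')$, and the two interlacing inequalities force $\lambda_{m-2}(G)\le x$ and $x<\lambda_{m+1}(G)$, whence $m-1\le N_G(x)\le m+1$, i.e. $|N_G(x)-N_{G'}(x)|\le 1$ for every $x$. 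Iterating over the at most $C$ operations and accounting for the bounded number of added or removed vertices yields a uniform constant $C'$ with $|N_{G_n}(x)-N_{G'_{n'}}(x)|\le C'$. Dividing by the respective vertex counts and using $|1/n-1/n'|\le C/(nn')$ then produces the displayed $\mathcal O(1/n)$ estimate uniformly in $x$.

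To conclude, assume $\mu(G_n)\rightharpoondown\rho$ and $\mu(G'_{n'})\rightharpoondown\rho'$ with distribution functions $F_\rho$ and $F_{\rho'}$. At any common continuity point $x$ we have $F_{G_n}(x)\to F_\rho(x)$ and $F_{G'_{n'}}(x)\to F_{\rho'}(x)$, while the uniform estimate forces $|F_{G_n}(x)-F_{G'_{n'}}(x)|\to 0$; hence $F_\rho(x)=F_{\rho'}(x)$. The continuity points form a co-countable set and both functions are right-continuous and monotone, so $F_\rho\equiv F_{\rho'}$ and therefore $\rho=\rho'$, which is exactly the assertion that the two families lie in the same spectral class.

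The step that genuinely requires work is the interlacing inequality. For the adjacency matrix or the combinatorial Laplacian an edge change is a rank-one or rank-two perturbation, and Cauchy--Weyl interlacing applies directly; for the normalized Laplacian $\Delta_G$, however, inserting or deleting an edge simultaneously alters the degree normalization at both of its endpoints, so $\Delta_{G'}-\Delta_G$ is not a low-rank perturbation of a fixed matrix. Securing the correct interlacing with a \emph{bounded} index shift, and hence the uniform constant in $|N_G(x)-N_{G'}(x)|\le 1$, is the part that relies on the specific structure of the normalized Laplacian rather than on a generic perturbation argument; everything downstream is then a routine counting-and-limit estimate.
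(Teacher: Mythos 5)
Your proof is correct, and its second half takes a genuinely different route from the paper's. The key input is the same in both arguments: the single-edge interlacing inequality $\lambda_{i-1}(G)\le\lambda_i(G')\le\lambda_{i+1}(G)$ for the normalized Laplacian, which the paper takes from Theorem 2.3 of \cite{chen2004interlacing} and packages (together with the zero eigenvalues contributed by isolated vertices) into Lemma \ref{lem:0}, i.e.\ $\lambda_{i-C}\le\theta_i\le\lambda_{i+C}$; your uniform bound $|N_{G_n}(x)-N_{G'_{n'}}(x)|\le C'$ on the eigenvalue counting functions is essentially the counting-function reformulation of that lemma, so you are not assuming anything the paper does not. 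Where you diverge is in how weak convergence is extracted. The paper works on the test-function side: it approximates a continuous $f$ by a differentiable one, splits it as $f=\tilde{f}_++\tilde{f}_-$ into monotone pieces, and uses the index-shifted sums to squeeze $\frac{1}{n}\sum_i F(\lambda_i)-\frac{1}{n'}\sum_i F(\theta_i)\to 0$ for monotone $F$. You instead work on the distribution-function side, proving $\sup_{x}|F_{G_n}(x)-F_{G'_{n'}}(x)|=\mathcal{O}(1/n)$ and invoking the characterization of weak convergence via convergence of distribution functions at continuity points, which the paper itself uses in the proof of Proposition \ref{path_cycle}. Your route arguably buys more: it yields a quantitative, uniform Kolmogorov-distance bound rather than only the qualitative coincidence of the weak limits, and it sidesteps the approximation-by-differentiable-functions step, which the paper leaves informal. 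Your closing caveat that an edge change in the normalized Laplacian is not a low-rank perturbation, so that generic Cauchy--Weyl interlacing does not apply and the bounded index shift genuinely needs the result of \cite{chen2004interlacing} (or \cite{horak2013interlacing}), is well placed and matches exactly where the paper outsources the work.
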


Theorem \ref{thm:interlacing1} is a consequence of the interlacing
properties of the Lapacian spectra
\cite{haemers1995interlacing,horak2013interlacing,chen2004interlacing} for
very similar graphs. More precisely, the eigenvalues of two graphs $G$ and
$G'$ control each other by virtue of inequalities of the form
\begin{equation}
\lambda_{i-t_{1}}\leq \lambda'_{i}\leq\lambda_{i+t_{2}},
\end{equation}
where the integers $t_{1}$ and $t_{2}$ are independent of the index $i$ but
explicitly depend on the topological characteristics of the operation
required to convert $G$ to $G'$. Interlacing properties for the normalized
Laplacian spectra were studied in particular in
\cite{chen2004interlacing}.

\begin{lemma} Let $G$ and $G'$ be two graphs as in Theorem
\ref{thm:interlacing1} and let
$\lambda_{0}\leq\lambda_{1}\leq\cdots\leq\lambda_{n-1}$ and
$\theta_{0}\leq\theta_{1}\leq\cdots\leq\theta_{n'-1}$
be the eigenvalues of $\Delta_G$ and $\Delta_{G'}$, respectively.
We then have a constant $C$ so that
\begin{equation}\label{interlacing}
 \lambda_{i-C}\leq\theta_{i}\leq\lambda_{i+C}
\end{equation}
holds for $i=0,1,\ldots,n-1$, where we used the notations $\lambda_j=0$ for
$j<0$ and $\lambda_j=2$ for $j\ge n$.
\label{lem:0}
\end{lemma}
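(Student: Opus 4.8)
My plan is to reduce the statement to a single edit operation and then compose. Since $G'_{n'}$ is obtained from $G_n$ by at most $C$ edits, I would fix a chain $G_n=H_0,H_1,\ldots,H_m=G'_{n'}$ with $m\le C$ in which each $H_{j+1}$ arises from $H_j$ by one edit operation (an edge insertion/deletion or an isolated-vertex insertion/deletion), and I write $\lambda^{(j)}_0\le\lambda^{(j)}_1\le\cdots$ for the spectrum of $\Delta_{H_j}$, extended by the conventions $\lambda^{(j)}_k=0$ for $k<0$ and $\lambda^{(j)}_k=2$ for $k\ge|V(H_j)|$. If I can show that one edit shifts the eigenvalue index by at most a fixed amount $c$, i.e.\ $\lambda^{(j)}_{i-c}\le\lambda^{(j+1)}_i\le\lambda^{(j)}_{i+c}$, then chaining these inequalities along the sequence yields $\lambda_{i-cm}\le\theta_i\le\lambda_{i+cm}$; since $cm\le cC$ is independent of $n$, I may take the constant in the lemma to be $cC$. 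Because the padded sequences are monotone in the index, the per-step shifts simply add when composed, so the bookkeeping across the chain is routine.

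The crux is the single-edge bound, and here the subtlety is that inserting an edge $uv$ changes not only the adjacency but also the degrees $d_u,d_v$, so $\Delta_{H_{j+1}}$ is \emph{not} a low-rank perturbation of $\Delta_{H_j}$ and the naive Weyl/Cauchy interlacing does not apply directly. I would instead use the variational characterization: the eigenvalues of $\Delta_H$ are the successive minima of the Rayleigh quotient $\mathcal{R}_H(f)=\frac{\sum_{xy\in E(H)}(f(x)-f(y))^2}{\sum_{x}d_x f(x)^2}$, so that by Courant--Fischer $\lambda_i=\min_{\dim S=i+1}\max_{0\ne f\in S}\mathcal{R}_H(f)$ (valid once the degree matrix $D$ is positive definite; isolated vertices only contribute the eigenvalue $0$ and are handled in the vertex-operation case, so I may assume the endpoints are non-isolated). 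The key observation is that on the subspace $W=\{f:f(u)=f(v)=0\}$, which has codimension $2$, both numerator and denominator are unchanged by the edge operation: the new edge contributes $(f(u)-f(v))^2=0$, and the degree increments at $u,v$ are multiplied by $f(u)^2=f(v)^2=0$. Hence $\mathcal{R}_{H_{j+1}}$ and $\mathcal{R}_{H_j}$ coincide on $W$.

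From this I obtain the per-step bound by the standard codimension principle. To bound $\lambda^{(j+1)}_i$ from above, take the span $S_0$ of the bottom $i+3$ eigenvectors of $\Delta_{H_j}$, on which $\mathcal{R}_{H_j}\le\lambda^{(j)}_{i+2}$; then $\dim(S_0\cap W)\ge i+1$, and on any $(i+1)$-dimensional subspace of $S_0\cap W\subseteq W$ the two quotients agree, so Courant--Fischer for $H_{j+1}$ gives $\lambda^{(j+1)}_i\le\lambda^{(j)}_{i+2}$. Swapping the roles of $H_j$ and $H_{j+1}$ (the subspace $W$ and the restricted quotient are symmetric in the two graphs) yields $\lambda^{(j)}_i\le\lambda^{(j+1)}_{i+2}$, i.e.\ the matching lower bound, so $c=2$ for edge operations. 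For an isolated-vertex insertion or deletion the vertex set changes by one and the spectrum changes only by inserting or removing a single eigenvalue $0$ at the bottom, shifting every index by at most one, so $c=1$ there. Combining, one edit shifts indices by at most $2$, and a chain of length $m\le C$ gives the claimed interlacing with constant $2C$. I expect the only genuine obstacle to be the single-edge step: recognizing that the degree change defeats a matrix-perturbation argument, and that the remedy is to pass to the Dirichlet-form Rayleigh quotient, where the effect of the edit is confined to a codimension-$2$ subspace. The composition and the treatment of isolated vertices and differing vertex counts are then routine bookkeeping with the padding conventions.
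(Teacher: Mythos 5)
Your proof is correct, and its overall architecture --- reduce to a chain of single edit operations, establish a bounded per-step index shift, and compose along the chain using the padding conventions $\lambda_j=0$ for $j<0$, $\lambda_j=2$ for $j\ge n$ --- is the same as the paper's. Where you genuinely diverge is at the crux: the paper disposes of the single-edge step by citing Theorem~2.3 of \cite{chen2004interlacing}, which gives $\lambda_{i-1}\le\theta_i\le\lambda_{i+1}$ for deletion of one edge, whereas you reprove the interlacing from scratch via Courant--Fischer for the generalized Rayleigh quotient $f^{T}Lf/f^{T}Df$, using the observation that numerator and denominator are both unchanged on the codimension-$2$ subspace $\{f: f(u)=f(v)=0\}$. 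That is essentially the mechanism underlying the cited theorem; your version is self-contained and correctly identifies why a naive matrix-perturbation argument fails (the degree matrix changes), at the cost of a slightly weaker per-edge shift ($2$ rather than the optimal $1$), which is immaterial since the lemma only asserts the existence of some constant independent of $n$. One point to tighten: your remark that you ``may assume the endpoints are non-isolated'' does not quite follow from deferring isolated vertices to the vertex-operation case, because the edge insertion that first connects a newly added vertex is itself an edge operation whose endpoint has degree zero in $H_j$, where the quadratic form $f^{T}Df$ degenerates and your Courant--Fischer characterization needs the convention that isolated vertices carry eigenvalue $0$; that step should be handled separately (e.g.\ by comparing $H_{j+1}$ directly with $H_j$ minus the isolated vertex, absorbing one extra unit of index shift). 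The paper's own proof has the same unaddressed issue when it first adds the $n'-n$ isolated vertices and then attaches edges to them, so this is a shared, easily patchable gap rather than a defect specific to your route.
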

\begin{proof}
  Without loss of generality, we suppose $n'\geq n$. Then we can obtain
  $G'$ from $G$ by firstly adding $(n'-n)$ isolated vertices, then deleting
  $t_1$ edges and finally adding $t_2$ edges. Note adding $(n'-n)$ isolated
  vertices to $G'$ produces $(n'-n)$ additional zero eigenvalues. Set
  $C:=n'-n+t_1+t_2$. Then (\ref{interlacing}) is a direct corollary of
  Theorem 2.3 in \cite{chen2004interlacing}, where the interlacing
  inequalities for deleting one edge is proved.
\end{proof}

With the above lemma, we can prove Theorem \ref{thm:interlacing1}.

\begin{proof}[Proof of Theorem \ref{seciton2main}]
  Suppose $G$ and $G'$ have normalized Laplacian spectra
$$0=\lambda_{0}\leq\lambda_{1}\leq\ldots\leq\lambda_{n-1}\leq 2
\mbox{ and } 0=\theta_{0}\leq\theta_{1}\leq\ldots\leq\theta_{n'-1}\leq 2,$$
respectively. Without loss of generality, we suppose $n'-n=C'\leq
C$. According to Lemma \ref{lem:0}, we
have $$\lambda_{i-C}\leq\theta_{i}\leq\lambda_{i+C}.$$

Let $f: [0,2]\rightarrow \mathbb{R}$ be a continuous function. By
approximation, we may assume that $f$ is differentiable. $f$ can be
decomposed into the sum $f=\tilde{f_{+}}+\tilde{f_{-}}$ of a monotonically
increasing function $\tilde{f_{+}}=\int_{0}^{x}(f')^{+}+f(0)$ and a
monotonically decreasing function $\tilde{f_{-}}=\int_{0}^{x}(f')^{-}$. For
continuous monotonic functions $F$ in $[0,2]$, we have,
$$\lim_{n\to \infty}{ \left(\frac{1}{n}\sum\limits_{i=0}^{n-1}F(\lambda_{i})-\frac{1}{n}\sum\limits_{i=-C}^{n-C-1}F(\theta_{i})\right)
 }\geq 0$$
and
\begin{equation*}
\lim_{n\to \infty}{ \left(\frac{1}{n}\sum\limits_{i=0}^{n-1}F(\lambda_{i})-\frac{1}{n}\sum\limits_{i=C}^{n+C-1}F(\theta_{i})\right) }\leq 0.
\end{equation*}
Because $C'\leq C$ is bounded and independent on $n$, and $F(x)$ is always
bounded, we have,
$$\lim_{n\to \infty}{ \left(\frac{1}{n}\sum\limits_{i=0}^{n-1}F(\lambda_{i})-\frac{1}{n'}\sum\limits_{i=0}^{n'-1}F(\theta_{i})\right) }=0.$$
This then also holds for $f$ because of the above decomposition. Therefore,
the families $(G_{n})$ and $(G'_{n})$ belong to the same spectral class.
\end{proof}

\section{The spectral distance on general graphs}

In this section, we explore the properties of the spectral distance between
two related finite graphs $G$ and $G'$, i.e., $G'_{n'}$ can be obtained
from $G_n$ by $C$ steps of edit operations as in Theorem
\ref{thm:interlacing1}. If the number of the edit operations is bounded by
a constant which is independent of the graph size, the spectral distance between
a graph and its editing graph tends to zero when their sizes tend to
infinity. We start with

\begin{thm}
  Let $(G_n)_{n\in \N}$ and $(G'_{n'})_{n'\in \N}$ be two families of
  graphs that belong to the same spectral class $\mu_0$. Then
  $\lim_{n\rightarrow \infty}D(G_n, G'_{n'})=0$.
\label{thm:spectral}
\end{thm}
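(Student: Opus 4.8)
The plan is to recognize the smoothed density $\rho_G$ as the convolution of the spectral measure $\mu(G)$ with a fixed Gaussian kernel, and then to deduce $L^1$-convergence of these densities from the assumed weak convergence of the measures. Writing $k_\sigma(x)=\frac{1}{\sqrt{2\pi\sigma^2}}e^{-x^2/(2\sigma^2)}$ for the kernel, formula \eqref{eq:02} says precisely that
$$\rho_G(x)=\int_{[0,2]}k_\sigma(x-y)\,d\mu(G)(y)=\mu(G)\big(k_\sigma(x-\cdot)\big),$$
so each $\rho_G$ is a probability density on $\R$ with $\int_\R\rho_G=1$. Let $\rho_0(x):=\int_{[0,2]}k_\sigma(x-y)\,d\mu_0(y)$ be the corresponding smoothed density of the common limit $\mu_0$; it too is a probability density. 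By the triangle inequality for the $\ell^1$-distance \eqref{e:distance},
$$D(G_n,G'_{n'})\le\int_\R|\rho_{G_n}(x)-\rho_0(x)|\,dx+\int_\R|\rho_0(x)-\rho_{G'_{n'}}(x)|\,dx,$$
so it suffices to show that each term vanishes as $n\to\infty$ (and hence $n'\to\infty$); by symmetry I would treat only the first.

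First I would establish pointwise convergence of the densities. For each fixed $x\in\R$ the map $y\mapsto k_\sigma(x-y)$ is continuous and bounded on $[0,2]$, so applying the hypothesis $\mu(G_n)\rightharpoondown\mu_0$ to this single test function gives
$$\rho_{G_n}(x)=\mu(G_n)\big(k_\sigma(x-\cdot)\big)\longrightarrow\mu_0\big(k_\sigma(x-\cdot)\big)=\rho_0(x)$$
for every $x$. The remaining task is to upgrade this pointwise convergence of probability densities to convergence in $L^1(\R)$. Since each $\rho_{G_n}$ and $\rho_0$ is nonnegative and integrates to $1$, Scheffé's lemma applies verbatim and yields $\int_\R|\rho_{G_n}-\rho_0|\to0$.

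If one prefers to avoid citing Scheffé's lemma, the same conclusion follows from dominated convergence together with an explicit Gaussian majorant. Since $\mu(G_n)$ is supported in $[0,2]$, one has $\rho_{G_n}(x)\le g(x)$ uniformly in $n$, where $g(x):=k_\sigma(0)$ for $x\in[0,2]$, $g(x):=k_\sigma(x-2)$ for $x>2$, and $g(x):=k_\sigma(x)$ for $x<0$; this $g$ is integrable on $\R$ by the Gaussian tail decay, and $\rho_0\le g$ as well, so $|\rho_{G_n}-\rho_0|\le2g$ is dominated and the pointwise limit passes under the integral. Either route kills the first term, and the symmetric argument disposes of the second, proving the theorem. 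I expect the only genuine subtlety to lie exactly in this passage from pointwise to $L^1$ convergence: weak convergence of the spectral measures supplies the integrand-wise limit for free, but without the uniform total-mass normalization (or, equivalently, the Gaussian domination) there would be nothing preventing mass from escaping to infinity. It is the use of one and the same smoothing kernel for all graphs that furnishes this control and makes the $\ell^1$-distance of the densities behave well under weak limits.
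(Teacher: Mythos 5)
Your proposal is correct and follows essentially the same route as the paper: both arguments obtain pointwise convergence $\rho_{G_n}(x)\to\rho_0(x)$ by applying the weak convergence hypothesis to the Gaussian test function $\lambda\mapsto k_\sigma(x-\lambda)$ and then pass to $L^1$ convergence by dominated convergence. The only difference is cosmetic and in your favor: you route the estimate through $\rho_0$ via the triangle inequality and exhibit the integrable majorant (or invoke Scheff\'e) explicitly, whereas the paper applies dominated convergence directly to $\int|\mu(G_n)(f^x)-\mu(G'_{n'})(f^x)|\,dx$ without naming the dominating function.
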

\begin{proof}
  Let $\mu(G_n)$, $\mu'(G'_{n'})$ be the spectral measures of $G_n$,
  $G'_{n'}$, and
$$f(x,\lambda):=\frac{1}{\sqrt{2\pi\sigma^2}}e^{-\frac{(x-\lambda)^2}{2\sigma^2}}.$$
For fixed $x$, we also write $f^x(\lambda):=f(x,\lambda)$ to indicate that it is a function of the variable $\lambda$. Then recalling (\ref{eq:06}), we have
$$\mu(G_n)(f^x)=\rho_G(x),$$
where $\rho_G(x)$ is the kernel function in (\ref{eq:02}).  Therefore we
have by the definition of graph distance
\begin{equation}\label{addedproof}
D(G_n, G'_{n'})=\int|\rho_{G_n}(x)-\rho_{G'_{n'}}|dx=
   \int|\mu(G_n)(f^x)-\mu(G'_{n'})(f^x)|dx.
\end{equation}
Applying Lebesgue's dominated convergence theorem yields
\begin{align*}
\lim_{n\rightarrow \infty}D(G_n, G'_{n'})&=\lim_{n\rightarrow \infty}
\int|\mu(G_n)(f^x)-\mu(G'_{n'})(f^x)|dx\\
&=\int|\lim_{n\rightarrow\infty}(\mu(G_n)(f^x)-\mu(G'_{n'})(f^x))|dx\\
&=\int|\mu_0(f^x)-\mu_0(f^x)|dx=0.
\end{align*}
\end{proof}

Theorem~\ref{thm:spectral} states that if the corresponding spectral
measures of two related families $(G_{n})$ and $(G'_{n'})$ have the same
weak limit, then their spectral distance tends to zero. Even when this
condition might not be satisfied, this conclusion can still hold for
certain functions.

\begin{thm}
  $G$ and $G'$ are graphs of size $n$ and $n'$ respectively. Assume that
  $G'$ can be obtained from $G$ by at most $C$ steps of edit operations,
  where $C$ is independent of $n$. Then $D(G,G')=\mathcal{O}(1/n)$ as
  $n\rightarrow \infty$.
\label{thm:interlacing}
\end{thm}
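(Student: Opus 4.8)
The plan is to estimate the $\ell^1$ distance \eqref{e:distance} by comparing the two smoothed densities \eqref{eq:02} one eigenvalue at a time, using two elementary features of the Gaussian kernel $f^x(\lambda)=\frac{1}{\sqrt{2\pi\sigma^2}}e^{-(x-\lambda)^2/(2\sigma^2)}$: for every fixed $\lambda$ one has $\int_{\R}f^x(\lambda)\,dx=1$, and $f^x$ is globally Lipschitz in its center, $\int_{\R}|f^x(a)-f^x(b)|\,dx\le L\,|a-b|$ with $L=\sqrt{2/\pi}\,/\sigma$ (obtained by integrating $|\partial_\lambda f^x|$ in $x$ and applying Tonelli). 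Write $\lambda_0\le\cdots\le\lambda_{n-1}$ and $\theta_0\le\cdots\le\theta_{n'-1}$ for the spectra of $G$ and $G'$, and assume without loss of generality $n'\ge n$, so $n'-n\le C$. I would then split
\[
\rho_{G}(x)-\rho_{G'}(x)
=\frac1n\sum_{i=0}^{n-1}\bigl(f^x(\lambda_i)-f^x(\theta_i)\bigr)
+\Bigl(\tfrac1n-\tfrac1{n'}\Bigr)\sum_{i=0}^{n-1}f^x(\theta_i)
-\frac1{n'}\sum_{j=n}^{n'-1}f^x(\theta_j),
\]
and bound $D(G,G')$ by the sum of the $L^1$-norms (in $x$) of the three terms.

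The last two terms are pure bookkeeping. Integrating in $x$ and using $\int_{\R}f^x(\theta_j)\,dx=1$, the normalization term contributes $(\tfrac1n-\tfrac1{n'})\,n=\tfrac{n'-n}{n'}\le \tfrac{C}{n}$, and the surplus term contributes $\tfrac1{n'}(n'-n)\le\tfrac{C}{n}$; both are $\mathcal{O}(1/n)$ precisely because $n'-n\le C$ is independent of $n$.

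The first term is where Lemma~\ref{lem:0} is needed, and it is the only real obstacle: the naive pairing $\theta_i\leftrightarrow\lambda_i$ need not make $|\lambda_i-\theta_i|$ small for any individual $i$ (eigenvalues can jump), so one cannot hope to control the terms one by one. Instead I would use the Lipschitz bound to get $\int_{\R}|f^x(\lambda_i)-f^x(\theta_i)|\,dx\le L|\lambda_i-\theta_i|$, then invoke the interlacing $\lambda_{i-C}\le\theta_i\le\lambda_{i+C}$, which gives $|\lambda_i-\theta_i|\le\lambda_{i+C}-\lambda_{i-C}$, and finally \emph{telescope} the average:
\[
\frac1n\sum_{i=0}^{n-1}\bigl(\lambda_{i+C}-\lambda_{i-C}\bigr)
=\frac1n\Bigl(\sum_{j=n-C}^{n+C-1}\lambda_j-\sum_{j=-C}^{C-1}\lambda_j\Bigr)
\le\frac{4C}{n},
\]
since the bulk indices cancel and only $2C$ boundary terms survive in each sum, each lying in $[0,2]$ (using the conventions $\lambda_j=0$ for $j<0$ and $\lambda_j=2$ for $j\ge n$ supplied by Lemma~\ref{lem:0}). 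Hence the first term is $\mathcal{O}(1/n)$ as well, and adding the three contributions yields $D(G,G')\le \tfrac{(4L+2)C}{n}=\mathcal{O}(1/n)$. The crux is thus the telescoping step: individual eigenvalues may be displaced by an amount of order one, yet interlacing forces these displacements to cancel in the average, leaving only a fixed number of boundary terms bounded by the diameter of the spectrum.
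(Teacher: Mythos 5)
Your proposal is correct and follows essentially the same route as the paper: the same three-way decomposition of $\rho_G-\rho_{G'}$ (the paper groups your last two terms into a single term II), the same reduction of the main term to $\frac{1}{n}\sum_i|\lambda_i-\theta_i|$ via the $L^1$-Lipschitz property of the Gaussian in its center (the paper gets the identical constant $\sqrt{2/\pi}/\sigma$ by a mean value theorem argument, which your derivation by integrating $|\partial_\lambda f^x|$ makes slightly more rigorous), and the same invocation of Lemma~\ref{lem:0} followed by telescoping to reduce $\sum_i(\lambda_{i+C}-\lambda_{i-C})$ to $\mathcal{O}(C)$ boundary terms. Your bookkeeping of the telescoped indices is in fact a bit more careful than the paper's.
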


\begin{proof} Suppose $G$ and $G'$ have normalized Laplacian spectra
$$0=\lambda_{0}\leq\lambda_{1}\leq\ldots\leq\lambda_{n-1}\leq 2
\hspace*{1cm} and \hspace*{1cm}
0=\theta_{0}\leq\theta_{1}\leq\ldots\leq\theta_{n'-1}\leq 2,$$
respectively. Without loss of generality, $n'\geq n$.

Then, the spectral distance $D(G,G')$ (recall \ref{e:distance}) is:
\begin{align*}
D(G,G')=&\int{\left|\rho_{G}(x)-\rho_{G'}(x)\right|dx}\\
\leq &\frac{1}{\sqrt{2\pi}\sigma}\int\left|
  \frac{1}{n}\sum\limits_{i=0}^{n-1}
  e^{-\frac{(x-\lambda_{i})^{2}}{2\sigma^{2}}}-
  \frac{1}{n}\sum\limits_{i=0}^{n-1}
  e^{-\frac{(x-\theta_{i})^{2}}{2\sigma^{2}}}\right|dx\\
&+\frac{1}{\sqrt{2\pi}\sigma}\int\left|\frac{1}{n}\sum\limits_{i=0}^{n-1}
  e^{-\frac{(x-\theta_{i})^{2}}{2\sigma^{2}}}-
  \frac{1}{n'}\sum\limits_{i=0}^{n-1}
  e^{-\frac{(x-\theta_{i})^{2}}{2\sigma^{2}}}-
  \frac{1}{n'}\sum\limits_{i=n}^{n'-1}
  e^{-\frac{(x-\theta_{i})^{2}}{2\sigma^{2}}}\right|dx\\
&:=\text{I}+\text{II}.
\end{align*}
Clearly, $\text{II}\leq(1/n-1/n')n+(n'-n)/n'\leq 2(n'-n)/n'$.

By the mean value theorem, we see that there exists $\lambda_i^0$ between
$\lambda_i$ and $\theta_i$ such that
\begin{align*}
 \text{I}&\leq\frac{1}{n\sigma\sqrt{2\pi}}\sum_{i=0}^{n-1}
 \int\frac{|x-\lambda_i^0|}{\sigma^2}
 e^{-\frac{(x-\lambda_i^0)^2}{2\sigma^2}}|\lambda_i-\theta_i|dx\\
 &=\frac{\sqrt{2}}{n\sigma\sqrt{\pi}}\sum_{i=0}^{n-1}|\lambda_i-\theta_i|.
\end{align*}
Recall by Lemma \ref{lem:0}, we have
$\lambda_{i-C}\leq\theta_{i}\leq\lambda_{i+C}.$ where $C\geq n'-n$.  Then,
we obtain
\begin{align*}
  D(G,G')\leq \text{I}+\text{II}\leq
  \frac{\sqrt{2}}{n\sigma\sqrt{\pi}}
  \sum_{i=0}^{n-1}(\lambda_{i+C}-\lambda_{i-C})+\frac{2C}{n}.
\end{align*}
Further observing that, when $n$ is large,
\begin{equation*}
  \sum_{i=0}^{n-1}(\lambda_{i+C}-\lambda_{i-C})=
  \sum_{j=n-C-1}^{n-1}\lambda_j-\sum_{j=0}^C\lambda_j\leq 2C,
\end{equation*}
completes the proof.
\end{proof}

\begin{coro}
  $G$ and $G'$ are graphs of size $n$ and $n+C$, where $C$ is independent
  of $n$. If the vertex degrees of these two graphs are bounded by a
  constant independent of $n$, then $D(G,G')=\mathcal{O}(1/n)$ as
  $n\rightarrow \infty$.
\label{coro:interlacing}
\end{coro}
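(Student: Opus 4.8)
The plan is to reduce the corollary to Theorem~\ref{thm:interlacing} by showing that, under the stated hypotheses, $G'$ can be obtained from $G$ by a number of edit operations that remains bounded as $n\to\infty$. In the setting of this section the two graphs are related, and the natural reading is that $G'$ arises from $G$ by adjoining $C$ additional vertices; equivalently, $G$ is the subgraph of $G'$ induced on its original $n$ vertices. Let $d$ denote the common bound on the vertex degrees of $G$ and $G'$, which by hypothesis is independent of $n$.

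First I would count the edit operations needed to pass from $G$ to $G'$. Adjoining the $C$ new vertices as isolated vertices costs $C$ vertex insertions. The remaining edges to be inserted are exactly those of $G'$ that are incident to at least one of the new vertices. Since every new vertex has degree at most $d$ in $G'$, the total number of such edges is at most $Cd$. Hence $G'$ is obtained from $G$ by at most $C+Cd=C(1+d)$ edit operations. Because $C$ and $d$ are both independent of $n$, the quantity $C(1+d)$ is a constant independent of $n$, so I would invoke Theorem~\ref{thm:interlacing} with the constant $C':=C(1+d)$ in place of $C$, which immediately yields $D(G,G')=\mathcal{O}(1/n)$.

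The substance of the corollary, and the only place where genuine care is needed, lies in the role of the degree bound. Adjoining $C$ vertices is only $C$ edit operations in itself, but each such vertex could in principle be joined to a number of existing vertices that grows with $n$, in which case the number of edge insertions, and with it the effective edit distance, would be $\Theta(n)$ rather than $\mathcal{O}(1)$. The bound furnished by Theorem~\ref{thm:interlacing} would then degrade to $\mathcal{O}(1)$ and become vacuous, and indeed one can exhibit bounded-vertex-difference graphs of unbounded degree whose spectral distance does not vanish. The hypothesis that the degrees are uniformly bounded is precisely what caps the number of newly inserted edges at the constant $Cd$, keeping the edit distance bounded and allowing the theorem to apply. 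I do not anticipate any further analytic obstacle beyond making this counting argument precise and fixing the assumed relationship between $G$ and $G'$.
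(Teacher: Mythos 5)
Your proposal is correct and follows essentially the same route as the paper: the paper's proof likewise observes that the bounded vertex degree forces the number of edit operations relating $G$ and $G'$ to be bounded by a constant independent of $n$, and then invokes Theorem~\ref{thm:interlacing}. Your explicit count of $C(1+d)$ operations (and your remark that the degree bound is exactly what prevents the edit distance from growing with $n$) simply makes precise what the paper states in one sentence.
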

\begin{proof}
  Since the vertex degree in bounded by a constant, the number of edit
  operations required to obtain $G'$ from $G$ is bounded by a constant that
  is independent of the graph sizes. This corollary then follows directly
  from Theorem \ref{thm:interlacing}.
\end{proof}

\section{The spectral distance for particular graph classes}
\label{particular}

In this section, we give some examples for estimating the spectral
distances between graphs in particular classes.
\begin{example} For two star graphs $G$ and $G'$ with $n$ and $n'$ vertices,
  the spectral distance $D(G,G')$ is proportional to the difference of
  their average degree, i.e.,
  $D(G,G')\propto|\frac{2(n-1)}{n}-\frac{2(n'-1)}{n'}|$.
  \label{T:complete_bipartite}
\end{example}
\begin{proof}
  The difference of the average degrees of star graphs $G$ and $G'$
  is $$T(G,G')=\left|\frac{2(n-1)}{n}-\frac{2(n'-1)}{n'}\right|=
  2\left|\frac{1}{n'}-\frac{1}{n}\right|.$$

  Recall that the normalized Laplacian spectra for two complete bipartite
  graphs, hence in particular for star graphs $G$ and $G'$ are
  $\{0,1^{\{n-2\}},2\}$ and $\{0,1^{\{n'-2\}},2\}$ respectively. Here the
  index of an eigenvalue indicates its multiplicity.

  Then the spectral distance $D(G,G')$ (recall (\ref{e:distance})) is
  \begin{align*}
    D(G,G')=&\int{\left|\rho_{G}(x)-\rho_{G'}(x)\right|dx}\\
    =&\frac{1}{\sqrt{2\pi}\sigma}\int
    \bigg|\frac{1}{n}\left(e^{-\frac{x^{2}}{2\sigma^{2}}}+
      (n-2)e^{-\frac{(x-1)^{2}}{2\sigma^{2}}}+
      e^{-\frac{(x-2)^{2}}{2\sigma^{2}}}\right)\\
    &-\frac{1}{n'}\left(e^{-\frac{x^{2}}{2\sigma^{2}}}+
      (n'-2)e^{-\frac{(x-1)^{2}}{2\sigma^{2}}}+
      e^{-\frac{(x-2)^{2}}{2\sigma^{2}}}\right)\bigg|dx\\
    =&\frac{1}{\sqrt{2\pi}\sigma}
    \left|\frac{1}{n}-\frac{1}{n'}\right|
    \int{\left|e^{-\frac{x^{2}}{2\sigma^{2}}}+
        e^{-\frac{(x-2)^{2}}{2\sigma^{2}}}-
        2e^{-\frac{(x-1)^{2}}{2\sigma^{2}}}\right|}dx\\
    \propto &2\left|\frac{1}{n}-\frac{1}{n'}\right|=T(G,G')
\end{align*}
\end{proof}

\begin{example} For two complete bipartite graphs or two complete graphs
  $G$ and $G'$ with $n$ and $n+C$ vertices, where $C$ is independent of
  $n$, we have, $D(G,G')=\mathcal{O}(1/n^{2})$, as $n\rightarrow \infty$.
\label{C:complete_bipartite2}
\end{example}
\begin{proof} For two complete bipartite graphs, this follows directly from
  Example \ref{T:complete_bipartite}. Indeed, we have
\begin{align*}
D(G,G')
\propto&\left|\frac{1}{n}-\frac{1}{n+C}\right|
=\mathcal{O}\left(\frac{1}{n^{2}}\right).
\end{align*}
Recall the spectrum of a complete graph $G$ with $n$ nodes is
$\{0,n/(n-1)^{\{n-1\}}\}$.  Then, their spectral distance $D(G,G')$ (recall
(\ref{e:distance})) is:
\begin{align*}
  D(G,G')=&\int{|\rho_{G}(x)-\rho_{G'}(x)|dx}\\
  =&\frac{1}{\sqrt{2\pi}\sigma}\int\bigg|\left(\frac{1}{n}-\frac{1}{n'}\right)
  e^{-\frac{x^{2}}{2\sigma^{2}}}+\left(1-\frac{1}{n}\right)
  \left(e^{-\frac{(x-n/(n-1))^{2}}{2\sigma^{2}}}-
    e^{-\frac{(x-n'/(n'-1))^{2}}{2\sigma^{2}}}\right)\\
  &+\left(1-\frac{1}{n'}-1+\frac{1}{n}\right)
  e^{\frac{(x-n'/(n'-1))^{2}}{2\sigma^{2}}}\bigg|dx\\
  \leq &2\left(\frac{1}{n}-\frac{1}{n'}\right)+ \frac{1}{\sqrt{2\pi}\sigma}
  \int\left|e^{-\frac{(x-n/(n-1))^2}{2\sigma^2}}-
    e^{-\frac{(x-n'/(n'-1))^{2}}{2\sigma^{2}}}\right|dx.\\
\end{align*}
Now applying the mean value theorem as in the proof of Theorem
\ref{thm:interlacing} and using $n'=n+C$, we arrive at
$D(G,G')=\mathcal{O}\left(\frac{1}{n^2}\right)$ as $n\rightarrow\infty$.
\end{proof}

The above two examples show the behavior of the spectral distance between
two graphs differing in finite vertices. Next, we discuss some cases when
the difference in their sizes tends to infinity.
\begin{example} Let $G$ and $G'$ be graphs with $n$ and $n'$ vertices so that
$n\le n' \le kn$ for some constant $k\in \N$. Then the spectral distance
$D(G,G')$ tends to zero when $n\rightarrow \infty$ in the following cases:
\begin{enumerate}
\item $G$ and $G'$ are complete or complete bipartite.
\item $G$ and $G'$ are cycles or paths.
\item $G$ is an $m-$cube and $G'$ an $(m+\ell)-$cube,
  for $m,\ell \in \N$ ($m\to \infty$, $\ell$ fixed).
\end{enumerate}
\label{T1:cube}
\end{example}
\begin{proof}
  By Theorem \ref{thm:spectral}, the spectral distance tends to zero if the
  corresponding spectral classes are the same. The statements then follow
  from Propositions \ref{complete_complete_bipartite}, \ref{path_cycle} and
  \ref{cube_cube}.
\end{proof}

The Examples \ref{C:complete_bipartite2} show that if two graphs belong to a
certain restricted class (complete or complete bipartite), then their
spectral distance decreases as $\mathcal{O}(n^{-2})$ for $n\rightarrow
\infty$, i.e., it converges to zero more quickly than the
$\mathcal{O}(n^{-1})$ bound of Theorem \ref{thm:interlacing}. A similar
result holds for cubes. From Example \ref{T1:cube}, the spectral distance
between two cubes tends to 0, when their sizes tend to infinity. Actually,
the spectral distance between the $(n-1)-$cube and the $n-$cube is less than
$2\erf\left(\frac{2}{n-1}\cdot\frac{1}{2\sigma\sqrt{2}}\right)$, where
$\erf(x):=\frac{2}{\sqrt{\pi}}\int_0^xe^{-t^2}dt$ is the Gauss error
function. Their size difference is $2^{n-1}$, which grows to infinity when
$n$ tends to infinity. However, if we scale their sizes as $n$ and $n+C$
respectively, then their spectral distance is equal to $\mathcal{O}(1/n\log
n)$ when $n\rightarrow \infty$, which converges to zero even more quickly
than $\mathcal{O}(1/n)$.

\section{Computational results for generic graphs}

In the previous section we have seen that the spectral distance between two
graphs from the same class tends to be small. We complement these results
here by a numerical investigation of some generic classes of graphs such
as  $k-$regular trees, random or scale-free graphs. The main motivation
for studying the spectral distance is its potential use as a means of
discriminating graphs in dependence on their structural differences. This
begs the question, of course, what we mean by structural difference in the
first place. For classes of random graphs, in particular, it seems natural
to require that this structure should be more or less independent of the
size, making the Radon measures discussed above an attractive choice.
Alternative classification schemes have of course been used in the
literature. Common schemes of generation, as in the Barab{\'a}si-Albert
scale-free model \cite{barabasi1999emergence,dorogovtsev2000structure} are
one possibility. Graphs generated by different models, e.g.\
Erd{\H{o}}s-R{\'e}nyi random graphs, should then be assigned to a different
class. Does our spectral distance measure reflect such differences?

As a further motivation, recall Wigner's famous semicircle law for the
asymptotic spectrum of random matrices. More specifically, the spectrum of
the normalized Laplacian of a random graph (except for the
eigenvalue 0, which asymptotically has a finite measure) converges to a
semicircle with radius $r=2/\sqrt{\overline{w}}$ \cite{chung2003spectra}.
A sufficient condition for this result is the assumption that the expected
minimum degree $d_{\min}$ is much larger than $\sqrt{\overline{w}}$, where
$\overline{w}$ is the expected average degree. This condition is satisfied
in particular by Erd{\H{o}}s-R\'{e}nyi random graphs and scale-free
graphs. Most graphs generated by the Erd{\H{o}}s-R\'{e}nyi random model satisfy
the condition $d_{\min}\gg \sqrt{\overline{w}}$, so the spectral distance
between two random graphs with the same average degree tends to zero as
their sizes tend to infinity. This result is also true for some scale-free
graphs. To elaborate this point, we now test our spectral distance in
simulations.

In order to compare two graphs in the same class with growing size, we use
the Barab{\'a}si-Albert scale-free model \cite{barabasi1999emergence,
  dorogovtsev2000structure} and the Erd{\H{o}}s-R{\'e}nyi random model
\cite{erdds1959random,erdHos1961evolution} as frames. We design the
following experiment. In the case of scale-free graphs, we generate two
groups of graphs from the same initial complete graph $H$ with small size
(say, 5). $G^{1}_{0}$ and $G^{2}_{0}$ are generated from $H$ independently
using preferential attachment, both of them have 1000 vertices. Then, we
produce two groups of scale-free graphs from $G^{1}_{0}$ and $G^{2}_{0}$
also with the preferential attachment, denoted by
$\{G^{1}_{0},G^{1}_{1},G^{1}_{2}, G^{1}_{3},\dots\}$ and
$\{G^{2}_{0},G^{2}_{1},G^{2}_{2}, G^{2}_{3},\dots\}$ respectively. In
Fig. \ref{F1:4_2_converage_2}(a), the $y$-coordinate shows the spectral
distance $D(G^{1}_{0},G^{j}_{i})$, where $j=1$ for the black curve with
$G^{1}_{0}$ and $G^{1}_{i}$ in the same group, and $j=2$ for the red curve
with $G^{1}_{0}$ and $G^{2}_{i}$ in different groups. The $x$-coordinate is
the size of $G^{j}_{i}$, denoted by $N(G^{j}_{i})$. At the beginning, the
distance $D(G^{1}_{0},G^{1}_{i})$ is smaller than $D(G^{1}_{0},G^{2}_{i})$,
but this difference disappears as the size of $G^{j}_{i}$ grows. We compare
this distance with the distance $D(G^{1}_{0},G^{3}_{i})$, where
$\{G^{3}_{i}\}$ are the random graphs with the same average degree and size
as $G^{1}_{i}$ and $G^{2}_{i}$. With the size growing, the distance
$D(G^{1}_{0},G^{3}_{i})$ is always larger than $D(G^{1}_{0},G^{1}_{i})$ and
$D(G^{1}_{0},G^{2}_{i})$. Similar results for random graphs can be seen in
Fig. \ref{F1:4_2_converage_2}(b), in which $\{G^{1}_{i}\}$ and
$\{G^{2}_{i}\}$ are two groups of random graphs, and $\{G^{3}_{i}\}$ is a
group of scale-free graphs. These results imply that the spectral distance
among graphs from different groups, but in the same class, approaches zero
as the size grows. In contrast, the spectral distance between graphs from
different classes, generated by different models and hence having different
structures, is always bigger than a certain value that is independent of
the graph size.

\begin{figure}[t]
\begin{center}
\includegraphics[width=4.8in]{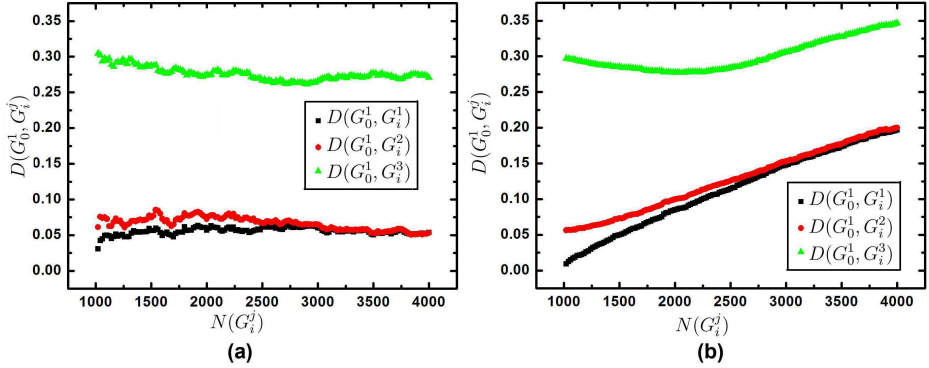}
\end{center}
\caption{The spectral distance among graphs in the same class with growing
  size, compared with that between graphs from different classes.}
\label{F1:4_2_converage_2}
\end{figure}

Similar results are obtained for $k-$regular trees. We consider $k-$regular
trees with different $k$ belonging to different small subclasses
independently of their size, for example, a $3-$ versus $4-$regular tree of
varying sizes. Starting from a small tree with 100 nodes and a fixed $k$,
we can get one group of $k-$regular trees through adding leaves. Here,
$T^{k}_{i}$ is a $k-$regular tree with $(i+1)\times 100$ vertices.

Fig. \ref{F1:4_2_converage_3}(a) plots the spectral distance among trees
with the same $k$ but different size. The $y-$coordinate is the spectral
distance $ D(T^{k}_{0}, T^{k}_{i})$, and the $x-$coordinate is the size of
$T^{k}_{i}$, denoted by $N(T^{k}_{i})$. For example, $D(T^{k}_{0},
T^{k}_{3})$ with $k=3$ means the spectral distance between two $3-$regular
trees of 100 vertices and $(3+1)\times 100=400$ vertices. All the curves
reach their highest values when $N(T^{k}_{i})=200$ ($i=1$ in the
simulation), but ultimately decrease close to zero. This shows the spectral
distance among two $k-$regular trees tends to zero when one of them is getting
larger while the other stays the same.

In contrast, the distance between two regular trees with different $k$, but
the same size, is bounded away from 0 independently of their
sizes. Fig. \ref{F1:4_2_converage_3}(b) shows the distance $
D(T^{k_{1}}_{i}, T^{k_{2}}_{i})$ for increasing size.

We conclude that the spectral distance between two large regular trees is
independent of their size, but depends on  the difference between
their degree $k$.
\begin{figure}[t]
\begin{center}
\includegraphics[width=4.8in]{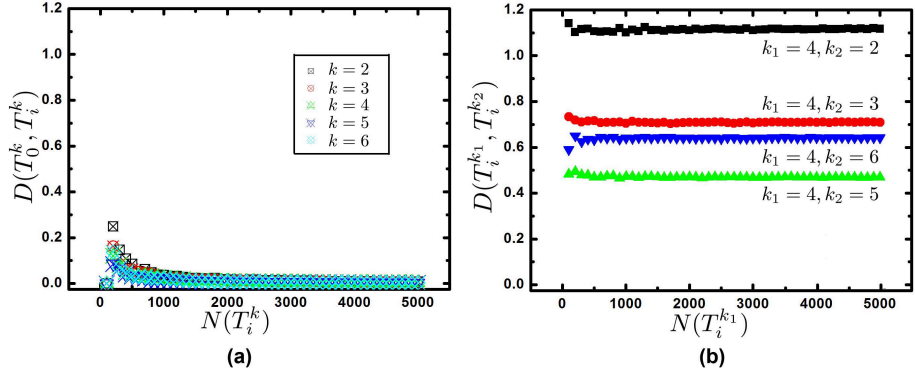}
\end{center}
\caption{The spectral distance between $k-$regular trees with growing
  size. (a) Trees from the same subclass, (b) Trees from different
  subclasses with $k_{1}=4$.}
\label{F1:4_2_converage_3}
\end{figure}

We finally compare these graph classes with each other and with some of the
graph classes such as complete (bipartite) graph, paths, cycles, or cubes,
that have been discussed in the previous section.

All the graphs are generated with almost equal size, around 2000 (except for the
$11-cube$ and $k-$regular graphs, the other graphs contain 2000
vertices). As mentioned before, the average degree can divide the
$k-$regular trees into some sub-classes, so we organzine groups of
graphs as their average degree ($\langle d\rangle$ is chosen from 4, 6,
8). The random graphs are obtained via the Erd{\H{o}}s-R\'{e}nyi random model
(see \cite{erdds1959random, erdHos1961evolution}), while the scale-free
graphs are obtained via the Barab\'{a}si-Albert scale-free model (see
\cite{barabasi1999emergence, dorogovtsev2000structure}). With fixed average
degree, 5 scale-free graphs and 5 random graphs are generated. For the  $n-$cube or $k-$regular trees, we choose the nearest number to 2000, for example, we
chose the $11-$cube, a $4-$regular tree with size 1457, a 6-regular tree
with size 4687 and a 8-regular tree with size 3201. We calculate the
spectral distance (Eq. \ref{e:distance}) among graphs in  one group, then
color values in the distance matrix, as shown in
Fig. \ref{F1:spectral_distance_1}.

In Fig. \ref{F1:spectral_distance_1}, a dark-blue square means that the spectral
distance is almost 0. With fixed average degree (Fig. \ref{F1:spectral_distance_1}(a)-(c)), such squares among the 5 random, the 5 scale-free graphs, between the cycles and paths and between the
 complete graph and the complete bipartite graph. Moreover, the
random graphs have small spectral distance to the scale-free graphs, and
also to the path and cycle. Fig. \ref{F1:spectral_distance_1}(d) shows all
the color squares above. In the group of random graphs, those with average
degree 4 are nearer to scale-free ones with average degree 4 than to random
ones with average degree 8, with respect to the spectral distance. A similar result obtains for the scale-free graphs. Thus, the
average degree can be a dominant factor when comparing graphs through the
spectral distance.

\begin{figure}[t]
\begin{center}
\includegraphics[width=5in]{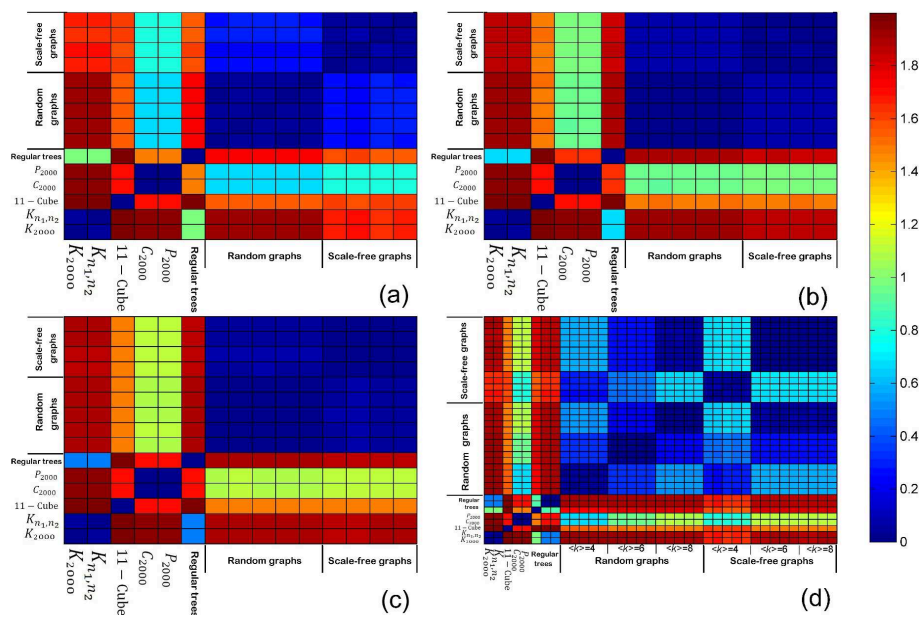}
\end{center}
\caption{The spectral distance between empirical graphs and particular
  graphs, whose sizes are around 2000 and degrees are $\langle d\rangle=$4,
  6, 8. (a) $\langle d\rangle=$4; (b) $\langle d\rangle=$6; (c) $\langle
  d\rangle=$8; (d) All plots taken from (a), (b) and (c).}
\label{F1:spectral_distance_1}
\end{figure}

In addition, we also take networks from empirical databases, such as
biological networks (domain-domain co-occurrence networks) and linguistic
networks (word-word co-occurrence networks). In biological networks, we
choose the species \textit{B.\ taurus} (2129 vertices, $\langle
d\rangle=11.22$), \textit{D.\ melanogaster} (1762 vertices, $\langle
d\rangle=9.51$), \textit{G.\ gallus} (1988 vertices, $\langle
d\rangle=10.85$), \textit{R.\ norvegicus} (2130 vertices, $\langle
d\rangle=11.01$) and \textit{S.\ scrofa} (1904 vertices, $\langle
d\rangle=9.98$). The random graphs and scale-free graphs with  similar
parameters (2000 vertices, $\langle d\rangle=10$) are selected for comparison. Fig. \ref{F1:spectral_distance_real}(a) is the distance matrix
composed of color squares. The biological networks are nearer to the
 random  and scale-free graphs than to the
regular graphs (such as cycle, path).

The linguistic networks (word-word co-occurrence networks) are generated
from the corpora (Wortschatz) database \cite{quasthoff2006corpus}. In contrast
 to biological networks, linguistic networks are quite dense, with average
degrees  around 500. We chose 5 linguistic networks with 2000 vertices:
German ($\langle d\rangle=512.83$), English ($\langle d\rangle=695.84$),
Danish ($\langle d\rangle=505.94$), Norwegian ($\langle d\rangle=490.04$)
and Swedish ($\langle d\rangle=481.7$). In
Fig. \ref{F1:spectral_distance_real}(b), linguistic networks are nearer to
the random graphs than to the other graphs. Since they are quite dense, as a typical
vertex is connected with around one quarter of the vertices in the whole
network, they are nearer to the complete graph and to the complete
bipartite graph than to the other regular graphs.

\begin{figure}[t]
\begin{center}
\includegraphics[width=5in]{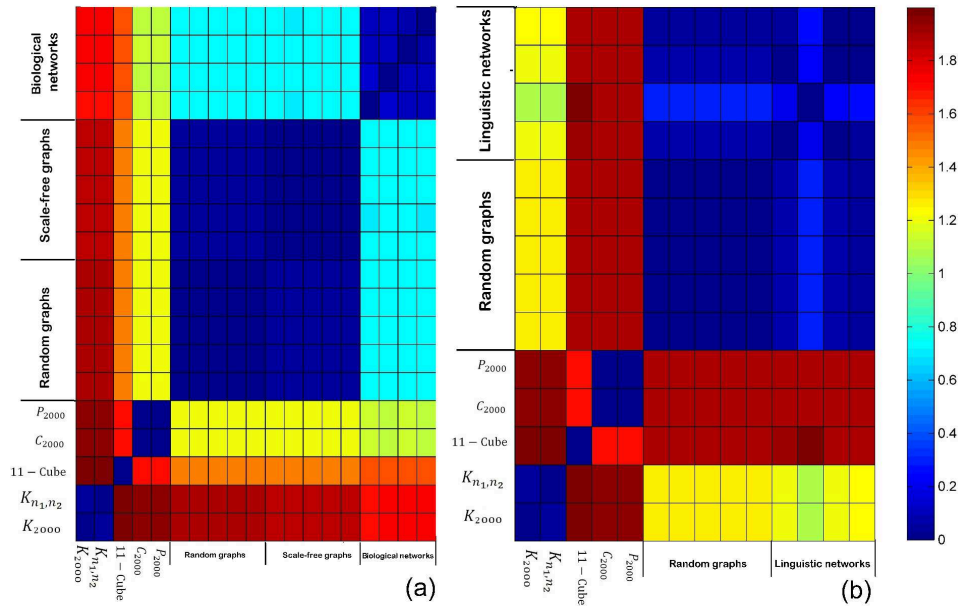}
\end{center}
\caption{The spectral distance between empirical graphs and networks from
  real database: (a) biological networks (domain-domain co-occurrence
  networks) (b) linguistic networks (word-word co-occurrence networks).}
\label{F1:spectral_distance_real}
\end{figure}

\section*{Acknowledgements}
JG was supported by the International Max Planck Research School of
Mathematics in the Sciences and thanks Dr.\ Qi Ding for useful
discussions. SL was partially supported by the EPSRC Grant EP/K016687/1.


\end{document}